\definecolor{seagreen}{RGB}{46,139,87}
\definecolor{maroon}{RGB}{128,0,0}
\definecolor{darkviolet}{RGB}{148,0,211}
\definecolor{twelve}{RGB}{100,100,170}
\definecolor{thirteen}{RGB}{100,150,50}
\definecolor{fourteen}{RGB}{200,0,0}
\definecolor{fifteen}{RGB}{0,200,0}
\definecolor{sixteen}{RGB}{0,0,200}
\definecolor{seventeen}{RGB}{200,0,200}
\definecolor{eighteen}{RGB}{0,200,200}
\newtheorem{thm}{Theorem}[section]
\newtheorem*{theorem*}{Theorem}
\newtheorem*{conjecture*}{Conjecture}
\newtheorem*{corollary*}{Corollary}
\newtheorem{lem}[thm]{Lemma}
\newtheorem{cor}[thm]{Corollary}
\newtheorem{prop}[thm]{Proposition}
\theoremstyle{definition}
\newtheorem*{exm*}{Example}
\newtheorem{exm}[thm]{Example}
\newtheorem{rem2}[thm]{Remark}
\newcommand{\lra}{\longrightarrow}
\newtheorem{thmx}{Theorem}
\def\c{\mathbb{C}}
\def\q{\mathbb{Q}}
\def\r{\mathbb{R}}
\def\z{\mathbb{Z}}
\def\rk{\operatorname{rk}}
\def\Aut{\operatorname{Aut}}
\newcommand{\upi}{\underline{\pi}}
\newcommand{\piGa}{\pi^G_{\alpha}(S^0)}
\newcommand{\piG}{\pi^G}
\newcommand{\ROG}{RO(G)}
\newcommand{\ROPG}{RO^+(G)}
\newcommand{\sm}{\wedge}
\newcommand{\Hom}{\mathrm{Hom}}
\newcommand{\Z}{\mathbb{Z}}
\newcommand{\Q}{\mathbb{Q}}
\newcommand{\bbA}{\mathbb{A}}
\newcommand{\st}{\; |\;}
\newcommand{\tensor}{\otimes}
\author{J.P.C. Greenlees}\address{Mathematics Institute, Zeeman Building, Covengtry CV4, 7AL, UK}\email{john.greenlees@warwick.ac.uk}
\author{J.D. Quigley}\address{Department of Mathematics, Cornell University, Ithaca, NY, USA}\email{jdq27@cornell.edu}
\title{Ranks of $RO(G)$-graded stable homotopy groups of spheres for
  finite groups $G$}
\begin{document}
\maketitle

\input{xypic}

\begin{abstract}
We describe the distribution of infinite groups within the
$RO(G)$-graded stable homotopy groups of spheres for a finite group $G$.
\end{abstract}

\maketitle

\tableofcontents
\section{Introduction}
\subsection{Overview}

In ordinary stable homotopy theory, one of the most basic theorems is
Serre's Finiteness Theorem \cite{Ser53} stating that the $n$-th stable homotopy group of the sphere,
 $\pi_n(S^0)$, is finite for $n>0$. Since
we understand that $\pi_0(S^0)\cong \Z$, this means that rationally the
structure of stable homotopy is very simple, and attention is quickly
focused on torsion.  Equivariantly, it is still true that 
rationalisation is a massive simplification, but the residual
structure in the rationalisation is worth
some attention. 

Let $G$ be a finite group and consider the $RO(G)$-graded stable
homotopy groups of the sphere \cite[Ch. IX]{Alaska}. The purpose of this note is to identify
the crudest feature of these groups: their ranks as abelian
groups. This is a straightforward deduction from well-known results,
but some interesting features emerge by giving a systematic account.

\begin{exm*}
Let $G = C_2$ be the cyclic group of order two. Then
$$RO(C_2) \cong \z\{1, \sigma\},$$
where $1$ is the one-dimensional trivial representation and $\sigma$ is the one-dimensional
sign representation. Computations of Araki--Iriye \cite{AI82} show that $\pi_\alpha^{C_2}(S^0)$ is 
infinite if
$$\alpha \in \z \{ 2(1 - \sigma) \} \cup \z \{\sigma\}.$$
Our results recover this observation, and show that these are the \emph{only} degrees for which $\pi_\alpha^{C_2}(S^0)$ is infinite. 
\end{exm*}

Using rational equivariant stable homotopy theory, we prove the following:

\begin{thmx}[{\cref{cor:piafinite}}]\label{Thmx:A}
Let $G$ be a finite group and $\alpha \in RO(G)$. Then
  $$\piGa \otimes \q =[S^{\alpha},S^0]^G \otimes \q =\prod_{(H)}\Hom_{W_G(H)}(\pi_0(S^{\alpha^H}), \Q),$$
where the product is taken over conjugacy classes of subgroups $H \leq G$.
  Thus $\piGa \otimes \q$ is a rational vector space of dimension $r_{\alpha}$, where 
  $$r_{\alpha}=|\{ (H)\st \alpha^H=0 \mbox{ and } W_G(H) \mbox{ acts trivially
    on } \pi_0(S^{\alpha^H}) \}|.$$
\end{thmx}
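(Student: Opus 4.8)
\emph{Proof strategy.}
The first equality is just the definition of $\piGa$, so the content is the product formula and the count it produces. The plan is to run the computation through the idempotent splitting of rational $G$-equivariant stable homotopy theory. For $G$ finite the marks give $\pi_0^G(S^0)\otimes\Q = A(G)\otimes\Q\cong\prod_{(H)}\Q$, and the associated idempotents split the rational $G$-equivariant stable homotopy category as a product $\prod_{(H)} D(\Q[W_G(H)])$ of derived categories of modules over the (ordinary, hence semisimple) group rings $\Q[W_G(H)]$, a $G$-spectrum $X$ corresponding to the tuple $\big(\pi_*(\Phi^H X)\otimes\Q\big)_{(H)}$, where $\Phi^H$ is geometric fixed points and $\pi_*(\Phi^H X)$ carries the residual action of $W_G(H)=N_G(H)/H$; this is tom~Dieck's idempotent decomposition of the rational Burnside ring together with the splitting of the rational category due to Greenlees--May. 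I would therefore start from the resulting identification
\[
[S^{\alpha},S^0]^G\otimes\Q\;\cong\;\prod_{(H)}\Hom_{D(\Q[W_G(H)])}\big(\pi_*(\Phi^H S^\alpha)\otimes\Q,\;\pi_*(\Phi^H S^0)\otimes\Q\big)
\]
and spend the argument identifying the two entries of each factor and evaluating the $\Hom$.

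For the target, $\Phi^H S^0 = S^0$ with trivial $W_G(H)$-action, so $\pi_*(\Phi^H S^0)\otimes\Q$ is $\Q$ in degree $0$ with trivial action. For the source, geometric fixed points are symmetric monoidal and carry representation spheres to representation spheres, so $\Phi^H S^\alpha = S^{\alpha^H}$ as $W_G(H)$-spectra, where $\alpha^H\in RO(W_G(H))$ is obtained from $\alpha$ by restriction to $N_G(H)$ followed by $H$-fixed points (extended additively to virtual classes). Underlying the equivariance, $S^{\alpha^H}$ is a sphere of dimension $d_H := \dim\alpha^H$, so Serre finiteness gives that $\pi_*(S^{\alpha^H})\otimes\Q$ is $\Q$ concentrated in degree $d_H$; the point I would take care over is that the induced $W_G(H)$-action on this one-dimensional homotopy is through the orientation character $\det(\alpha^H)\colon W_G(H)\to\{\pm1\}\subset\Q^{\times}$, because each $w\in W_G(H)$ acts on the top homology of $S^{\alpha^H}$ by a degree that is multiplicative over direct sums and inverted on the virtual-negative part of $\alpha^H$. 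Thus $\pi_*(\Phi^H S^\alpha)\otimes\Q$ is the sign module $\Q(\det\alpha^H)$ placed in degree $d_H$; note this coincides with $\pi_0(S^{\alpha^H})\otimes\Q$ with its natural $W_G(H)$-action when $d_H=0$, while $\pi_0(S^{\alpha^H})$ is finite, hence rationally zero, when $d_H\neq0$.

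To finish, since both entries of the $(H)$-factor are graded $\Q[W_G(H)]$-modules concentrated in a single degree and $\Q[W_G(H)]$ is semisimple, the relevant mapping spectrum is discrete and its $\pi_0$ is the group of degree-preserving $\Q[W_G(H)]$-module homomorphisms: this is $\Hom_{\Q[W_G(H)]}(\Q(\det\alpha^H),\Q)$ when $d_H=0$ and $0$ otherwise, which by the preceding remark about $\pi_0(S^{\alpha^H})$ equals $\Hom_{W_G(H)}(\pi_0(S^{\alpha^H}),\Q)$ in all cases; this yields the displayed product. Finally $\Hom_{\Q[W]}(\Q(\chi),\Q)$ is $\Q$ when the character $\chi$ is trivial and $0$ otherwise, so the $(H)$-factor is one-dimensional exactly when $d_H=0$ and $\det(\alpha^H)$ is trivial --- equivalently, when $S^{\alpha^H}$ is a $0$-sphere and $W_G(H)$ acts trivially on $\pi_0(S^{\alpha^H})\cong\Z$ --- and $r_\alpha$ is the number of conjugacy classes $(H)$ with this property.

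The only non-formal ingredient is the splitting of the rational category; granting it, the part that needs care is the $RO(G)$-graded bookkeeping, and above all the orientation character $\det(\alpha^H)$: a dropped sign there changes exactly which conjugacy classes $(H)$ contribute, as the $C_2$ example already shows, where the summand for the trivial subgroup contributes precisely along the line $\Z\{2(1-\sigma)\}$, on which the coefficient of $\sigma$ is even.
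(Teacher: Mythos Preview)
Your argument is correct and follows essentially the same route as the paper: invoke the idempotent splitting of the rational $G$-equivariant stable category (the paper cites \cite{GMTate}, App.~A), identify each factor via geometric fixed points $\Phi^H S^\alpha\simeq S^{\alpha^H}$, and then apply Serre finiteness together with the orientation character to evaluate the $W_G(H)$-equivariant $\Hom$. The only cosmetic difference is that the paper's primary proof derives the needed isomorphism more directly from the collapsing Atiyah--Hirzebruch-type spectral sequence for $[EG_+\wedge X,Y]^G_*$ combined with the idempotent decomposition $S^0\simeq\bigvee_{(H)} e_H S^0$, while you quote the full categorical equivalence; the paper itself records your version as an alternative proof in a remark.
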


We lay the groundwork for applying this theorem in \cref{Sec:Orientation} and \cref{Sec:Geometry}. We then compute the ranks of the
$RO(G)$-graded stable homotopy groups of spheres for various $G$ in \cref{Sec:Examples}.

In \cref{Sec:Variations}, we discuss two natural variations where the same techniques give information. 
Since the sphere is rationally an
Eilenberg-MacLane spectrum for the Burnside Mackey functor,
$S^0\simeq_{\Q} H\bbA$, we may view our methods as a calculation of the
rationalization of  $H^{\star}_G(S^0; \bbA)$ (where $\star$ denotes
$RO(G)$-grading). The same methods apply to
give a calculation of the rationalization of  $H^{\star}_G(S^0;
M)$ for any  Mackey functor $M$. For the second variation,  we may
consider the Picard-graded stable homotopy groups of spheres:
invertible objects are again characterised in terms of orientations
and dimension functions (see \cite{FLM}). 

Finally, we note that our results provide a basis for understanding other large-scale phenomena in the $RO(G)$-graded stable homotopy groups of spheres. For example, Iriye \cite{Iri82} showed that Nishida's nilpotence theorem \cite{Nis73} holds equivariantly: an element $\pi_\star^G(S^0)$ is torsion if and only if it is nilpotent. \cref{Thmx:A} therefore explicitly describes the regions of $\pi_\star^G(S^0)$ in which elements can be nilpotent and non-nilpotent. 

\subsection{Finite generation}
For most of the paper we will work rationally, but we would like to
draw conclusions about the integral situation. For completeness we
include the proofs of the basic finiteness statements that permit this
deduction. 
\begin{lem}\label{lem:finite}
For any $\alpha \in RO(G)$, the sphere $S^{\alpha}$ is a finite $G$-cell spectrum.
\end{lem}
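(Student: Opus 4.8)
Proof proposal for Lemma 1.2 ($S^\alpha$ is a finite $G$-cell spectrum for any $\alpha \in RO(G)$):

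The plan is to reduce the statement to the case of a genuine representation sphere and then invoke Spanier--Whitehead duality. Every class in $RO(G)$ may be written as $\alpha = [V] - [W]$ for actual orthogonal $G$-representations $V$ and $W$, and then $S^{\alpha} \simeq S^V \wedge D(S^W)$, where $D$ denotes Spanier--Whitehead duality. The collection of finite $G$-cell spectra is closed under smash products (a finite cell structure on $X$ smashed with one on $Y$ yields one on $X\wedge Y$, since $(G/H)_+ \wedge (G/K)_+ = (G/H\times G/K)_+$ again has finitely many cells) and under Spanier--Whitehead duality (the dual of a finite cell complex is finite, built from the self-dual cells $D(G/H_+)\simeq G/H_+$). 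Hence it suffices to prove that the representation sphere $S^V$ is a finite $G$-cell spectrum for every $G$-representation $V$.

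For this it is enough to equip the space $S^V$ with a finite $G$-CW structure. When $V=0$ this is the two-point space $S^0$, which is a finite $G$-CW complex; when $\dim_{\mathbb{R}} V\ge 1$, the one-point compactification $S^V$ is $G$-homeomorphic (via radial coordinates) to the unreduced suspension of the unit sphere $S(V)$, and the unreduced suspension of a finite $G$-CW complex is again a finite $G$-CW complex. So the claim reduces to the statement that $S(V)$ admits a finite $G$-CW structure. This is where the real content lies: $S(V)$ is a compact smooth $G$-manifold, so one may invoke Illman's equivariant triangulation theorem; alternatively one can argue by induction on $|G|$, covering $S(V)$ by equivariant tubes $G\times_H(\mathrm{slice})$ with $H$ a proper subgroup of $G$ when $V^G=0$, and splitting off the trivial summand $V^G$ (as a join factor) otherwise.

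Thus the main obstacle is precisely the finite $G$-CW structure on a unit sphere $S(V)$; everything else — writing $\alpha = V - W$, the behaviour of cell structures under smash products and duality, and the passage from $S(V)$ to $S^V$ — is formal. I would organise the write-up by first recording the two closure properties of finite $G$-cell spectra, then establishing (or citing) the finiteness of $S(V)$, and finally assembling $S^{\alpha} \simeq S^V \wedge D(S^W)$.
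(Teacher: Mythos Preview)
Your proposal is correct and follows essentially the same route as the paper: write $\alpha=V-W$, observe that $S^V$ is a smooth compact $G$-manifold and hence a finite $G$-CW complex (the paper leaves Illman's theorem implicit), note that duals of finite spectra are finite via the self-duality $D(G/H_+)\simeq G/H_+$, and conclude using $S^{\alpha}\simeq S^V\wedge S^{-W}$. Your write-up simply fills in a few details (closure under smash, the reduction to $S(V)$) that the paper suppresses.
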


\begin{proof}
For an actual representation $V$, the sphere $S^V$ is a smooth compact
manifold and hence admits the structure of a finite
$G$-CW-complex. By exactness of Spanier--Whitehead duality, $DS^V\simeq
S^{-V}$ is also a finite $G$-CW spectrum (since by the Wirthm\"uller
isomorphism $DG/H_+\simeq G/H_+$). Now if $\alpha =V-W$,
$S^{\alpha}\simeq S^V\sm S^{-W}$, so the result follows. 
  \end{proof}

The following consequence fails for infinite compact Lie groups. 
\begin{lem}
For any $\alpha \in \ROG$ the abelian group $\piGa$ is finitely
generated. Consequently 
$\piGa$ is finite if and only if $\piGa\tensor \Q=0$. 
\end{lem}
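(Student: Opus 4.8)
The plan is to deduce this from \cref{lem:finite} by a cellular induction, with the nonequivariant Serre Finiteness Theorem as the only external input. Write $\piGa = [S^{\alpha},S^0]^G$. By \cref{lem:finite} the sphere $S^{\alpha}$ is a finite $G$-cell spectrum, so it can be assembled from finitely many cells $\Sigma^{n}G/H_+$ (with $H \leq G$ and $n \in \Z$, possibly negative) through a finite sequence of cofiber sequences $\Sigma^{n-1}G/H_+ \to X_i \to X_{i+1}$. I will show by induction on the number of cells that $[X, S^0]^G$ is a finitely generated abelian group for every finite $G$-cell spectrum $X$; applying this to $X = S^{\alpha}$ gives the first claim.

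For the base case, the Wirthm\"uller isomorphism $D(G/H_+)\simeq G/H_+$ (used already in \cref{lem:finite}) together with the induction--restriction adjunction gives, for every $n \in \Z$,
$$[\Sigma^{n}G/H_+,\, S^0]^G \;\cong\; [S^{n},\, S^0]^H \;=\; \pi^H_n(S^0).$$
This group is finitely generated: for $n<0$ it vanishes, and for $n \geq 0$ the tom Dieck splitting identifies it with a finite direct sum $\bigoplus_{(K)} \pi^s_n\big((BW_G(K))_+\big)$ over conjugacy classes of subgroups $K \leq H$. Each $W_G(K)$ is finite, so $BW_G(K)$ has homology of finite type, and the Atiyah--Hirzebruch spectral sequence together with Serre's theorem \cite{Ser53} forces each summand, hence the whole sum, to be finitely generated.

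For the inductive step, apply the contravariant functor $[-,\,S^0]^G$ to a cofiber sequence $\Sigma^{n-1}G/H_+ \to X_i \to X_{i+1}$. The resulting long exact sequence exhibits $[X_{i+1},S^0]^G$ as an extension of a subgroup of $[\Sigma^{n-1}G/H_+, S^0]^G$ by a quotient of $[X_i, S^0]^G$; since finitely generated abelian groups are closed under subgroups, quotients, and extensions, $[X_{i+1},S^0]^G$ is finitely generated once $[X_i, S^0]^G$ is. (Equivalently: the full subcategory of finite $G$-spectra on which $[-,S^0]^G$ takes finitely generated values is thick and contains every $\Sigma^{n}G/H_+$, hence contains $S^{\alpha}$.) The last sentence of the lemma is then formal: a finitely generated abelian group $A$ splits as $\Z^{r}\oplus T$ with $T$ finite, so $A\otimes\Q \cong \Q^{r}$, which vanishes exactly when $r = 0$, i.e.\ exactly when $A$ is finite.

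I do not expect a genuine obstacle: the bulk of the argument is routine thick-subcategory and long-exact-sequence bookkeeping. The only place where substantive input is needed is the base case --- that the integer-graded genuine $G$-equivariant stable stems $\pi^H_n(S^0)$ are finitely generated --- and this is precisely what is supplied by Serre's Finiteness Theorem through the tom Dieck splitting.
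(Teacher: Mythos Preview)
Your argument is correct and is essentially the paper's own proof spelled out in full: the paper likewise invokes the Segal--tom Dieck splitting to get finite generation of the integer-graded stems and then appeals to \cref{lem:finite} for the cellular induction you make explicit. One small slip: in your base case the Weyl group in the tom Dieck splitting of $\pi^H_n(S^0)$ should be $W_H(K)$, not $W_G(K)$.
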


\begin{proof}
From the Segal--tom Dieck splitting theorem \cite{tD75}, we see that $\piG_n(S^0)$
is a finitely generated abelian group. By \cref{lem:finite}, it
follows that $\piGa$ is finitely generated. 
\end{proof}

\cref{Thmx:A} describes the $RO(G)$-graded rational
homotopy groups of the sphere. By the previous lemma, this determines precisely those 
degrees $\alpha \in RO(G) $ for which $\piGa$ is finite.

\subsection{Conventions}
Henceforth everything is rational. We write $G$ for a finite group, $H$ for a subgroup of $G$, and $W_G(H) = N_G(H)/H$ for the Weyl group of $H$. We use $*$ to denote $\z$-graded groups and $\star$ to denote $RO(G)$-graded groups. If $V \in RO(G)$, then $|V|$ denotes its (virtual) dimension. 

\subsection{Acknowledgements}
The second author thanks Eva Belmont, Bert Guillou, Dan Isaksen, and Mingcong Zeng for helpful discussions related to this work. The authors also thank William Balderrama for discussions concerning nilpotence, and especially for pointing out \cite{Iri82}, which answered a question in a previous version. 

\section{Rational stable homotopy}\label{Sec:Rational}
For finite groups, it is easy to give a complete model of rational
$G$-spectra \cite[App. A]{GMTate}. We do not need the full strength
of this description, so we describe what we want in a convenient form.

First, note that for any $X$ and $Y$, passage to geometric fixed points
gives a map
$$\Phi^H: [X,Y]^G\lra [\Phi^HX, \Phi^HY]. $$
The codomain admits an action of the Weyl group $W_G(H)$ by conjugation, and $\Phi^H$ 
takes values in the $W_G(H)$-equivariant maps.

\begin{thm}\label{Thm:RationalMaps}
  If $Y$ is rational, the maps $\Phi^H$ give an isomorphism
  $$[X,Y]^G_*=\bigoplus_{(H)}H^0(W_G(H); [\Phi^HX, \Phi^HY]_*), $$
where the sum is taken over conjugacy classes of subgroups $H \leq G$.
  Furthermore, passage to homotopy groups gives isomorphisms
  $$H^0(W_G(H); [\Phi^HX, \Phi^HY]_*)=\Hom_{W_G(H)}(\pi_*(\Phi^HX), \pi_*(\Phi^HY)).$$
  \end{thm}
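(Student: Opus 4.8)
The plan is to reduce the statement to the algebraic model of rational $G$-spectra and then identify the contribution of each conjugacy class of subgroups. Since everything is rational, $Y$ splits (as a rational $G$-spectrum) as a product over conjugacy classes $(H)$ of pieces built from the geometric fixed point spectra $\Phi^H Y$, which carry a residual $W_G(H)$-action. Concretely, the category of rational $G$-spectra is equivalent to a product over $(H)$ of categories of rational $W_G(H)$-spectra with free $W_G(H)$-action (equivalently, modules over the rational group ring of $W_G(H)$ in spectra), and under this equivalence $Y$ corresponds to the collection $\{\Phi^H Y\}_{(H)}$. The first step, then, is to invoke this structural result to obtain a natural isomorphism
$$[X,Y]^G_* \;\cong\; \bigoplus_{(H)} [\Phi^H X, \Phi^H Y]^{hW_G(H)}_*,$$
where the right-hand side is morphisms in the category of rational $W_G(H)$-spectra with free action, i.e. genuine $W_G(H)$-equivariant maps up to homotopy.

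The second step is to compute the homotopy-fixed-point / Borel mapping group $[\Phi^H X, \Phi^H Y]^{hW_G(H)}_*$ appearing in each summand. Because $Y$ is rational, $\Phi^H Y$ is a rational spectrum, and the homotopy groups of a rational spectrum with $W_G(H)$-action form a graded $\Q[W_G(H)]$-module. Since $W_G(H)$ is a finite group and we are working rationally, $\Q[W_G(H)]$ is semisimple, so the homotopy fixed point spectral sequence for the (Borel) $W_G(H)$-action collapses: all higher group cohomology $H^{>0}(W_G(H); M)$ vanishes for any $\Q[W_G(H)]$-module $M$. This gives the identification of $[\Phi^H X, \Phi^H Y]^{hW_G(H)}_*$ with $H^0(W_G(H); [\Phi^H X, \Phi^H Y]_*)$, which is the first displayed isomorphism in the theorem. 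The same semisimplicity is what lets us pass from the derived mapping spectrum to ordinary $\Hom$: the mapping spectrum $F(\Phi^H X, \Phi^H Y)$ is rational, so its homotopy groups are computed by a collapsing (rational) universal-coefficient / Künneth-type spectral sequence, yielding $[\Phi^H X, \Phi^H Y]_* \cong \Hom(\pi_*(\Phi^H X), \pi_*(\Phi^H Y))$ as graded $\Q[W_G(H)]$-modules, naturally in the action. Taking $W_G(H)$-invariants of both sides then gives the second displayed isomorphism
$$H^0(W_G(H); [\Phi^H X, \Phi^H Y]_*) \;=\; \Hom_{W_G(H)}(\pi_*(\Phi^H X), \pi_*(\Phi^H Y)).$$

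Assembling the two steps yields the theorem. The main obstacle — or rather the main thing to get right — is the bookkeeping in the first step: one must be careful that the equivalence with the product of categories of free rational $W_G(H)$-spectra is compatible with the geometric fixed point functors and the conjugation action, so that the $(H)$-summand genuinely has the form claimed and the $W_G(H)$-action on $[\Phi^H X, \Phi^H Y]_*$ is the conjugation action. Once that structural input is in hand, the remaining arguments are all consequences of the semisimplicity of $\Q[W_G(H)]$ for $W_G(H)$ finite, and are essentially formal. I would cite \cite[App.~A]{GMTate} for the model of rational $G$-spectra and treat the collapse of the relevant spectral sequences as standard.
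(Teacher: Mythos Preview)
Your proposal is correct and arrives at the same conclusion by essentially the same mechanisms (collapse of the Borel/homotopy fixed point spectral sequence over $\Q$, and the rational splitting of spectra into Eilenberg--MacLane pieces), but you package the first step differently from the paper. The paper's proof is lighter: it starts from the rational idempotent splitting $S^0\simeq\bigvee_{(H)} e_H S^0$ of the Burnside ring and the Borel spectral sequence obtained by filtering $EG_+$ by skeleta, rather than invoking the full categorical equivalence of rational $G$-spectra with a product of module categories. Your route front-loads a stronger structural theorem (the algebraic model of \cite[App.~A]{GMTate}) and then reads off the answer; the paper's route uses only the idempotents and the spectral sequence, which is a strictly smaller input. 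In fact the paper records, in a remark immediately following its proof, exactly the alternative argument you sketch: split rationally into Eilenberg--MacLane spectra, use that rational Mackey functors are projective and injective, and then decompose $\Hom$ of Mackey functors over conjugacy classes. So your approach is the paper's ``alternative'' proof rather than its primary one; both are valid, with the paper's main argument being marginally more self-contained and yours being more conceptual once the model is granted.
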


\begin{proof}
Filtering $EG_+$ by skeleta gives a spectral sequence
$$H^*(G; [X,Y]_*)\Rightarrow [EG_+\sm X, Y]^G_* $$
for (integral) stable maps. 
When $Y$ is rational, this collapses to an isomorphism
$$H^0(G; [X,Y]_*)=[EG_+\sm X, Y]^G_*$$
Combining this with the splitting $S^0\simeq \bigvee_{(H)} e_HS^0 $
we obtain the first stated isomorphism. The second comes 
from the classical version of Serre's Theorem \cite{Ser53}.
\end{proof}

\begin{rem2}
 There is an alternative approach, via \cite{GMTate}. First, we observe that 
 $X\simeq \prod_n\Sigma^n H\upi^G_n(X)$, and then use the
 fact that all rational Mackey functors are projective and injective to deduce
 $$[X,Y]^G\stackrel{\cong}\lra \prod_n\Hom( \upi^G_n(X), \upi^G_n(Y)). $$
Now we use the structure of Mackey functors to deduce
$$\Hom( \upi^G_n(X), \upi^G_n(Y)) \cong \prod_{(H)}\Hom_{W_G(H)}(\pi_n(\Phi^HX), \pi_n(\Phi^HY)),$$
as claimed.
\end{rem2}

Since $G$ acts trivially on $S^0$, $W_G(H)$ acts trvially on $\pi_0(S^0)=\Q$. We then have the following
consequence of \cref{Thm:RationalMaps}:

\begin{thm}
  \label{cor:piafinite}
Let $G$ be a finite group and $\alpha \in RO(G)$. Then
  $$\piGa=[S^{\alpha},S^0]^G=\prod_{(H)}\Hom_{W_G(H)}(\pi_0(S^{\alpha^H}), \Q),$$
where the product is taken over conjugacy classes of subgroups $H \leq G$.
  Thus $\piGa$ is a rational vector space of dimension $r_{\alpha}$, where 
  $$r_{\alpha}=|\{ (H)\st \alpha^H=0 \mbox{ and } W_G(H) \mbox{ acts trivially
    on } \pi_0(S^{\alpha^H}) \}|.$$
  \end{thm}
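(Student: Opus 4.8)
The plan is to deduce \cref{cor:piafinite} directly from \cref{Thm:RationalMaps} by specializing to the case $X = S^\alpha$ and $Y = S^0$, and then computing the geometric fixed points and the relevant $W_G(H)$-actions explicitly. First I would apply \cref{Thm:RationalMaps} verbatim with these choices, which immediately gives
$$[S^\alpha, S^0]^G_* = \bigoplus_{(H)} \Hom_{W_G(H)}(\pi_*(\Phi^H S^\alpha), \pi_*(\Phi^H S^0)),$$
and I would restrict attention to the summand in degree $* = 0$, since $\piGa$ is by definition $[S^\alpha, S^0]^G_0$. (One should remark that the direct sum here agrees with the product in the statement: only finitely many subgroups up to conjugacy exist for $G$ finite.)

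Next I would identify the two geometric fixed point spectra. On the target, $\Phi^H S^0 \simeq S^0$ for every $H$, with the residual $W_G(H)$-action trivial, so $\pi_0(\Phi^H S^0) = \Q$ with trivial action — this is exactly the observation made just before the theorem statement, that $W_G(H)$ acts trivially on $\pi_0(S^0)=\Q$. On the source, the key fact is that geometric fixed points commute with smash products and send representation spheres to representation spheres of the fixed subrepresentation: $\Phi^H S^V \simeq S^{V^H}$ for an actual representation $V$, and hence $\Phi^H S^\alpha \simeq S^{\alpha^H}$ for a virtual $\alpha = V - W$ by the same argument used in \cref{lem:finite}. Here $\alpha^H \in RO(W_G(H))$ is the virtual representation obtained by taking $H$-fixed points, and its underlying virtual dimension is the integer I will call $|\alpha^H|$. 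Substituting, the degree-zero part of the $(H)$-summand becomes $\Hom_{W_G(H)}(\pi_0(S^{\alpha^H}), \Q)$, which is the formula claimed.

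Finally I would extract the dimension count $r_\alpha$. Working rationally, $\pi_0(S^{\alpha^H})$ is nonzero precisely when $|\alpha^H| = 0$: if $|\alpha^H| \neq 0$ then $S^{\alpha^H}$ is rationally contractible in degree $0$ (its rational homotopy is concentrated in degree $|\alpha^H|$, by Serre's theorem applied to the underlying spectrum), so the $\Hom$ vanishes; if $|\alpha^H| = 0$ then $\pi_0(S^{\alpha^H}) \cong \Q$ as a rational vector space, but $W_G(H)$ may act on it through the sign of the determinant of the action on the relevant representation. The $\Hom_{W_G(H)}$ into $\Q$ (with trivial action) is then one-dimensional if that $W_G(H)$-action on $\pi_0(S^{\alpha^H})$ is trivial, and zero otherwise. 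Summing the local contributions over conjugacy classes $(H)$ gives exactly
$$r_\alpha = |\{ (H) \st \alpha^H = 0 \text{ and } W_G(H) \text{ acts trivially on } \pi_0(S^{\alpha^H}) \}|,$$
with the convention that ``$\alpha^H = 0$'' here encodes the condition $|\alpha^H| = 0$ on virtual dimension. The only genuinely delicate point — the ``main obstacle'' — is pinning down the $W_G(H)$-action on $\pi_0(S^{\alpha^H})$ and verifying that the naive condition ``$\alpha^H = 0$'' correctly captures nonvanishing of the rational $\Hom$; everything else is a direct substitution into \cref{Thm:RationalMaps} together with the standard compatibility of $\Phi^H$ with smash products and representation spheres.
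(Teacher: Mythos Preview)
Your proposal is correct and follows exactly the approach the paper takes: the paper presents \cref{cor:piafinite} as an immediate consequence of \cref{Thm:RationalMaps} after observing that $W_G(H)$ acts trivially on $\pi_0(S^0)=\Q$, and you have simply spelled out the substitution $\Phi^H S^\alpha \simeq S^{\alpha^H}$, $\Phi^H S^0 \simeq S^0$ and the ensuing dimension count in more detail than the paper does. Your remark that ``$\alpha^H=0$'' should be read as the virtual dimension $|\alpha^H|$ vanishing is consistent with how the paper uses the notation in \cref{Sec:Geometry}.
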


  \section{The orientation character}\label{Sec:Orientation}
  \newcommand{\aut}{\mathrm{Aut}}
  \newcommand{\mutwo}{\mu_2}
For any real representation $V$ the group $G$ acts on $H_{|V|}(S^V)$,
giving a homomorphism
$$o_V: G\lra \aut(\Z)=\mutwo.$$
We view this as an element $o_V\in H^1(G;\mutwo)$. In view of the
K\"unneth isomorphism
$$H_{|V|}(S^V)\tensor  H_{|W|}(S^W)
\stackrel{\cong}\lra H_{|V+W|}(S^{V\oplus W}),$$
this gives a homomorphism
$$o: \ROG \lra H^1(G; \mutwo). $$
Elements of the kernel $\ROPG$ of $o$ are {\em orientable} virtual
representations. 

\begin{exm}
Clearly $2\alpha$ is orientable for any $\alpha$. More generally,
the image of any complex representation is orientable, as is any element
in the image of $RSO(G)\lra RO(G)$.  
\end{exm}

\begin{rem2}
It is clear that an orientable
representation $\rho: G\lra O(n)$ is one that takes values in
representations of determinant 1, so that it comes  from $RSO(G)$.
However, this is not true of virtual
representations. For example,  if $G=\Sigma_3$, then  $V-\sigma - 1$  is
orientable (where $V$ is the reduced regular representation and
$\sigma$ is the sign representation). However, only even multiples of
$\sigma$ or $V$ come from $RSO(\Sigma_3)$. 
\end{rem2}
   
\section{Geometry of the ranks of the $RO(G)$-graded stable stems}\label{Sec:Geometry}
To make the answer in \cref{cor:piafinite} explicit there are
now two ingredients: (a) the dimension of the fixed
points and (b) the orientations.

\subsection{Virtual representations of fixed point dimension zero}
If we list the simple real representations $S_1, S_2, \ldots, S_r$ of $G$, we
may identify $\ROG=\Z^r$. Now,  for each subgroup $H \leq G$ we have a dimension vector
$$d_H=(\dim (S_1^H), \ldots , \dim(S_r^H)), $$
and the space of virtual representations $\alpha$ with
$\alpha^H=0$ is  
$$N_H=\{ x\st x\cdot d_H=0\}, $$
which is isomorphic to $\Z^{r-1}$ as an abelian group.
The only $\alpha$ for which $\piGa$ can be infinite are those lying in
some $N_H$, and the maximum rank of $\piGa$ is the number of conjugacy
classes of $H$ with $\alpha \in N_H$.

When $H=G$, the Weyl group $W_G(H)$ is trivial, and we immediately
draw a useful conclusion.

\begin{cor}
If $V$ is a virtual representation with  $V^G=0$ then 
$$\rk \pi_V^G(S^0) \geq 1$$
\end{cor}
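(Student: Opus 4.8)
The plan is to apply \cref{cor:piafinite} directly with the special subgroup $H = G$. By that theorem, we have the product decomposition
$$\pi_V^G(S^0) = \prod_{(H)} \Hom_{W_G(H)}(\pi_0(S^{V^H}), \Q),$$
so it suffices to exhibit one conjugacy class $(H)$ whose factor is nonzero, and the natural candidate is the class of $G$ itself. First I would observe that when $H = G$, the Weyl group $W_G(G) = N_G(G)/G$ is trivial, so the equivariance condition in $\Hom_{W_G(G)}(-,-)$ imposes nothing.

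Next I would use the hypothesis $V^G = 0$: this gives $S^{V^G} = S^0$, hence $\pi_0(S^{V^G}) = \pi_0(S^0) = \Q$. Therefore the factor indexed by $(G)$ is
$$\Hom_{W_G(G)}(\pi_0(S^{V^G}), \Q) = \Hom(\Q, \Q) \cong \Q,$$
which is a one-dimensional rational vector space. Since this factor appears in the product defining $\pi_V^G(S^0)$ and the other factors contribute dimension $\geq 0$, we conclude $\rk \pi_V^G(S^0) \geq 1$. Equivalently, in the notation of \cref{cor:piafinite}, the conjugacy class $(G)$ always contributes to $r_\alpha$ when $V^G = 0$, since $W_G(G)$ is trivial and hence acts trivially on $\pi_0(S^{V^G})$.

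There is essentially no obstacle here: the statement is an immediate specialization of \cref{cor:piafinite}, and the only point worth making explicit is that the subgroup $H = G$ is always one of the conjugacy classes being summed over, its Weyl group is trivial, and the orientation/triviality condition is automatically satisfied because $G$ acts trivially on $\pi_0(S^0) = \Q$ regardless of $V$. The proof is a single line once \cref{cor:piafinite} is in hand.
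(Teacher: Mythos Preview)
Your proposal is correct and matches the paper's approach exactly: the paper simply observes, in the sentence preceding the corollary, that when $H=G$ the Weyl group $W_G(G)$ is trivial, so the corresponding factor in \cref{cor:piafinite} automatically contributes whenever $V^G=0$. You have spelled out precisely this one-line argument.
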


\begin{rem2}
  One special case is when $V$ is a multiple of the reduced regular
  representation $\bar{\rho}$. This was observed  to the
  second author by Bert Guillou, who noted that it follows from the
  fact that $\Phi^G(S^0) \simeq S^0$ and that geometric fixed points
  are given by inverting the Euler class of the reduced regular
  representation.

  On this same theme, if $V$ is a representation with $V^G=0$, the
  inclusion of the origin gives a map $a_V: S^0\lra S^V$ whose
  $G$-fixed points generates $\pi_0(S^0)$.  The element $a_V$ is thus of
  infinite order in $\pi^G_{-V}(S^0)$.  The $G$-component of the map
  $a_V$ will not usually be invertible integrally. However, by \cref{Thm:RationalMaps}, there is a rational map $a_V'\in
  \pi_V^G(S^0)$ whose $G$-component is the inverse of $a_V$. The problem of finding
  the smallest positive multiple of $a'_V$ that is integral is of
  considerable interest; the case of the group of order 2 was studied
  classically by Landweber \cite{Lan69}, but is now best treated using
  motivic homotopy theory \cite{BGI21, GI20}. For the group of odd prime order
  $p$, it was studied by Iriye \cite{Iri89}. 
\end{rem2}

\subsection{Orientability}
If $W_G(H)$ is of odd order, then all the
gradings in $N_H$ give infinite groups.  In general, on each such null
space $N_H$ we have an orientation
$$o_H: N_H\lra H^1(W_G(H); \mutwo). $$
As noted above, the kernel $N_H^+$ contains all even vectors of $N_H$ and the
image of all complex representations.

The set of $\alpha$ for which $\piGa$ is infinite is $\bigcup_H
N_H^+$.  The rank $r_{\alpha}$ of $\piGa$ is the number of conjugacy classes
$H$ with $\alpha \in N_H^+$.

\subsection{Bases}
If we choose a subgroup $H$ giving an associated fixed point vector
$d_H$, we note that the component of the trivial representation $S_1$
is always 1, so that $N_H$ has basis $S_2-d_H(2), S_3-d_H(3), \ldots,
S_r-d_H(r)$.
The orientation $o_H$ is thus described by the 
homomorphisms 
$$o_H(2), o_H(3), \ldots, o_H(r): W_G(H)\lra \mu_2$$
where $o_H(i)=o_H(S_i-d_H(i))$.  Since $W_G(H)$ always acts trivially on the trivial 
representation, the orientation $o_H(S_i-d_H(i))=o_H(S_i)$, 
and  $o_H(i)$ is the determinant of $S_i^H$. Since $o_H$ is
a homomorphism, this determines its values throughout. All the
homomorphisms factor through the largest elementary abelian 2-quotient $E_2(H)$
of $W_G(H)$ (i.e., we factor out commutators and squares). 

\section{The two variations}\label{Sec:Variations}
In effect, our calculation in \cref{cor:piafinite} was of
$$\piGa \tensor \Q =[S^\alpha, S^0]^G\tensor \Q=[S^\alpha,  
H\bbA]^G\tensor \Q=H^0_G(S^\alpha; \bbA)\tensor \Q. $$ 
We point out that the same methods allow us to calculate
$$[S^\alpha, 
HM]^G\tensor \Q=H^0_G(S^\alpha; M)\tensor \Q $$
for any invertible spectrum $S^\alpha$ and rational $G$-Mackey functor $M$. Indeed we still have 
$$[S^\alpha, 
HM]^G\tensor \Q=\prod_{(H)}\Hom_{W_G(H)}(H_0(S^{\alpha^H}), M^{e
  H}), $$
where $M$ corresponds to $\{ M^{e H}\}_H$ under the equivalence
$$\mbox{$G$-MackeyFunctors$/\Q$}\simeq \prod_{(H)}\mbox{$\Q W_G(H)$-modules}.$$
More explicitly, $M^{e H}=M(G/H)/(\mathrm{proper \ transfers})$.
In other words, 
$$\rk [S^\alpha, HM]^G \otimes \q = \sum_{(H)} z_H \cdot m(\alpha , H),$$
where $z_H=1$ if $\alpha^H=0$ and $z_H=0$ otherwise, and  where $m(\alpha
,H)$ is the multiplicity of the simple $\Q W_G(H)$-representation
$H_0(S^{\alpha^H})$ in $M^{e H}$.

The only $M^{eH}$ which can possibly give infinite groups are those with summands
coming from a homomorphism $W_G(H)\lra \mu_2$.  Since the sphere
corresponds to the Burnside Mackey functor $\bbA$ with $\bbA^{eH}=\Q$
(with trivial action), it has almost as many $RO(G)$-gradings which are
infinite as is possible.

\section{Examples}\label{Sec:Examples}

We conclude by explicitly calculating the ranks of the $RO(G)$-graded stable homotopy groups of spheres for groups $G$ with small subgroup lattices. 

\subsection{Cyclic group of order two}

We have
$$RO(C_2) \cong \z\{1, \sigma\}$$
where $1$ is the ($1$-dimensional) trivial representation and $\sigma$ is the sign representation. Then
$$N_e \cong \z \{ 1 - \sigma\}, \quad N_{C_2} \cong \z\{\sigma\}.$$
Since $W_{C_2}(C_2) = e$, we have
$$N_{C_2}^+ = N_{C_2} \cong \z\{\sigma\}.$$
On the other hand, $W_{C_2}(e) \cong C_2/e \cong C_2$ acts by $(-1)$ on $1-\sigma$, so
$$N_e^+ \cong \z\{2(1-\sigma)\}.$$
Each representation $V \in N_{C_2}^+ \cup N_e^+$ satisfies $\rk \pi_V^{C_2}(S^0) \geq 1$. Since $N_{C_2}^+ \cap N_e^+ = \{0\}$, we also have $\rk \pi_0^{C_2}(S^0) = 2$. Altogether, we find:

\begin{prop}
We have
\[
\rk \pi_V^{C_2}(S^0) =
\begin{cases}
	2 \quad & \text{ if } V = 0, \\
	1 \quad & \text{ if } V \in (\z\{\sigma\} \cup \z\{2(1-\sigma)\}) \setminus \{0\}, \\
	0 \quad & \text{ otherwise.}
\end{cases}
\]
\end{prop}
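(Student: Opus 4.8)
The plan is to derive this proposition directly from \cref{cor:piafinite}, which asserts that $\rk \pi_V^{C_2}(S^0) = r_V$ counts the conjugacy classes $(H)$ of subgroups of $C_2$ with $V^H = 0$ and $W_{C_2}(H)$ acting trivially on $\pi_0(S^{V^H}) = H_0(S^{V^H})$. Since $C_2$ has exactly two subgroups, $e$ and $C_2$, the rank is simply the sum of the indicator that $V \in N_e^+$ and the indicator that $V \in N_{C_2}^+$. So the entire proof reduces to identifying these two sets explicitly, which is exactly the content of the discussion preceding the statement.

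First I would record $RO(C_2) = \z\{1,\sigma\}$ and compute the two null spaces: writing $V = a \cdot 1 + b \cdot \sigma$, we have $V^{C_2} = a \cdot 1$ (since $\sigma^{C_2} = 0$) and $V^e = V$ has dimension $a+b$. Hence $N_{C_2} = \{V : a = 0\} = \z\{\sigma\}$ and $N_e = \{V : a+b=0\} = \z\{1-\sigma\}$. Next I would compute the orientation characters. For $H = C_2$ the Weyl group $W_{C_2}(C_2)$ is trivial, so the triviality-of-action condition is automatic and $N_{C_2}^+ = N_{C_2} = \z\{\sigma\}$. For $H = e$ the Weyl group is $W_{C_2}(e) = C_2$, and it acts on $H_0(S^{V})$ via the orientation character $o_V \colon C_2 \to \mu_2$; on the generator $1 - \sigma$ of $N_e$ this character is $o_1 \cdot o_\sigma^{-1} = o_\sigma$, which is the nontrivial homomorphism because $C_2$ acts by $-1$ on $H_1(S^\sigma)$. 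Therefore $o_V$ is trivial on $a(1-\sigma)$ precisely when $a$ is even, giving $N_e^+ = \z\{2(1-\sigma)\}$.

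Finally I would assemble the count. A representation $V$ contributes $1$ to the rank for each of $N_{C_2}^+$ and $N_e^+$ that contains it. Since $N_{C_2}^+ \cap N_e^+ = \z\{\sigma\} \cap \z\{2(1-\sigma)\} = \{0\}$ inside $\z\{1,\sigma\}$, the only $V$ lying in both is $V = 0$, which thus has rank $2$; any nonzero $V$ in exactly one of $\z\{\sigma\}$ or $\z\{2(1-\sigma)\}$ has rank $1$; and any $V$ in neither has rank $0$. This is exactly the claimed case distinction.

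There is no real obstacle here — the only point requiring a moment's care is the identification of the Weyl-group action on $H_0(S^{V^e})$ with the orientation character and the resulting even-multiple condition, but this is precisely what \cref{Sec:Orientation} and the ``Bases'' discussion of \cref{Sec:Geometry} set up, so it can be cited rather than reproved. The computation of $N_{C_2}^+ \cap N_e^+$ is elementary linear algebra over $\z$ in rank $2$.
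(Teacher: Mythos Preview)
Your proposal is correct and follows essentially the same approach as the paper: the paper's discussion preceding the proposition computes $N_e$, $N_{C_2}$, their orientation subgroups $N_e^+ = \z\{2(1-\sigma)\}$ and $N_{C_2}^+ = \z\{\sigma\}$, and their intersection $\{0\}$, then reads off the rank from \cref{cor:piafinite} exactly as you do.
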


\begin{figure}
\includegraphics[trim=0 40 0 0, clip]{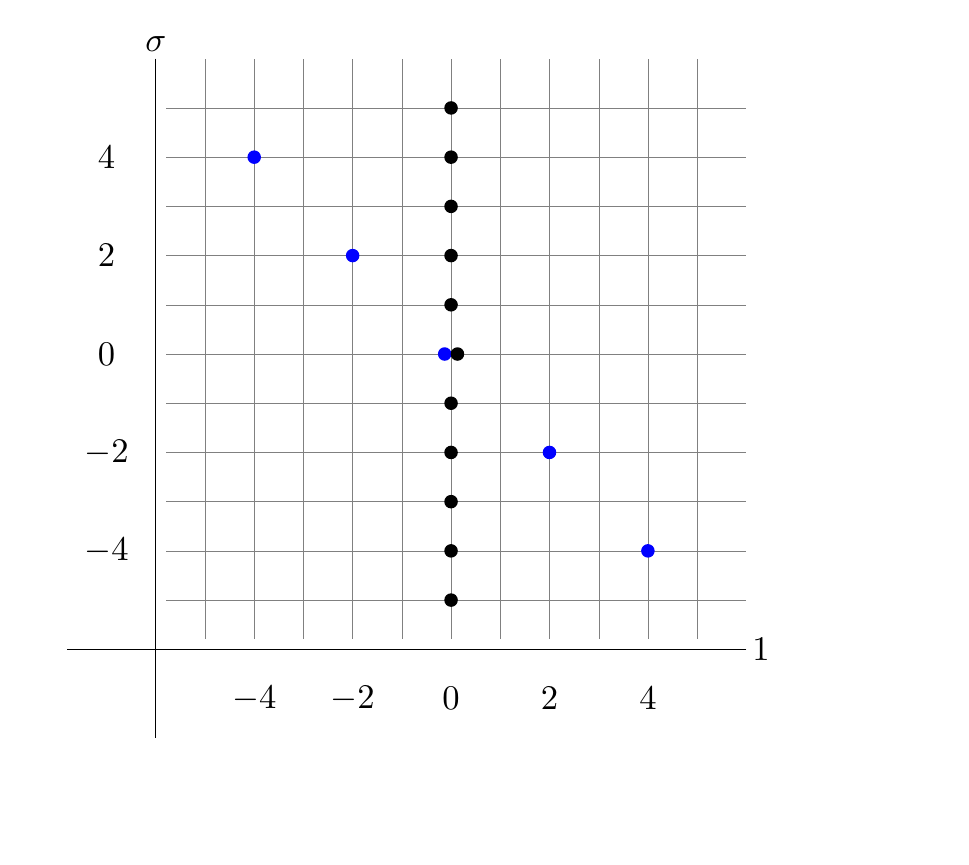}
\caption{Degrees in $RO(C_2)$ where $\pi_V^{C_2}(S^0)$ has infinite rank. A black $\bullet$ indicates a copy of $\z$ arising from $N_{C_2}^+$ and a blue $\bullet$ indicates a copy of $\z$ arising from $N_e^+$.}
\end{figure}

\begin{rem2}
The fact that $\pi_V^{C_2}(S^0)$ is infinite for $V \in \z\{\sigma\} \cup \z\{2(1-\sigma)\}$ appears in \cite[Thm. 7.6]{AI82}. A proof that these are the only degrees for which $\pi_V^{C_2}(S^0)$ is infinite using the $C_2$-equivariant Adams spectral sequence was communicated to the second author by Bert Guillou and Dan Isaksen. 
\end{rem2}

\subsection{Cyclic group of odd prime order}

Let $q = \frac{p-1}{2}$. We have
$$RO(C_p) \cong \z\{ 1, \phi_1,\ldots,\phi_{q}\},$$
where $1$ is the $1$-dimensional trivial representation and $\phi_t: C_p \to \Aut(\r^2) \cong \Aut(\c)$ sends the generator of $C_p$ to $\cdot e^{2\pi i t/p}$. Then
$$N_e \cong \z \{ 2 - \phi_1, \ldots, 2 - \phi_q\}, \quad N_{C_p} \cong \z \{\phi_1,\ldots,\phi_q\}.$$
Since $W_{C_p}(e) \cong C_p$ and $W_{C_p}(C_p) = e$ necessarily act trivially on $\z$, we have
$$N_e^+ = N_e, \quad N_{C_p}^+ \cong N_{C_p}.$$
Finally, we have
$$N_e^+ \cap N_{C_p}^+ \cong \z\{\phi_1 - \phi_2, \ldots, \phi_1 - \phi_q\}.$$

\begin{prop}
We have
\[
\rk \pi_V^{C_p}(S^0) = 
\begin{cases}
	2 \quad & \text{ if } V \in \z \{ \phi_1 - \phi_2, \ldots, \phi_1 - \phi_q\}, \\
	1 \quad & \text{ if } V \in (\z\{2 - \phi_1, \ldots, 2 - \phi_q\} \cup \z \{\phi_1,\ldots,\phi_q\}) \setminus \z\{\phi_1-\phi_2,\ldots,\phi_1-\phi_q\}, \\
	0 \quad & \text{ otherwise.}
\end{cases}
\]
\end{prop}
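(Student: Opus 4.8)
The plan is to specialize \cref{cor:piafinite} to $G = C_p$, where it asserts that $\rk \pi_V^{C_p}(S^0)$ equals the number of conjugacy classes of subgroups $H \leq C_p$ for which $V^H = 0$ and $W_{C_p}(H)$ acts trivially on $\pi_0(S^{V^H})$; as recorded in \cref{Sec:Geometry}, this count is exactly the number of $(H)$ with $V \in N_H^+$. So the proof amounts to listing the conjugacy classes of subgroups and identifying the groups $N_H^+$ for each.

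First I would note that $C_p$ has only the two subgroups $e$ and $C_p$, each forming its own conjugacy class. For $H = C_p$ the fixed-point dimension vector is $d_{C_p} = (1,0,\dots,0)$, since $1^{C_p} = 1$ while $\phi_t^{C_p} = 0$ for each $t$; hence $N_{C_p} = \z\{\phi_1,\dots,\phi_q\}$, and since $W_{C_p}(C_p)$ is trivial we get $N_{C_p}^+ = N_{C_p}$. For $H = e$ the dimension vector is $d_e = (1,2,\dots,2)$, giving $N_e = \z\{2-\phi_1,\dots,2-\phi_q\}$; because $W_{C_p}(e) \cong C_p$ has odd order and $\Aut(\z) = \mu_2$ has order $2$, the orientation character $o_e$ is forced to be trivial, so $N_e^+ = N_e$. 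These are precisely the identifications stated just before the proposition.

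It then follows that $\rk \pi_V^{C_p}(S^0)$ is $2$ when $V \in N_e \cap N_{C_p}$, is $1$ when $V$ lies in exactly one of $N_e$ and $N_{C_p}$, and is $0$ otherwise. The remaining step is to compute the intersection: a general element $\sum_t a_t \phi_t$ of $N_{C_p}$ lies in $N_e$ precisely when $\sum_t 2a_t = 0$, i.e. $\sum_t a_t = 0$, whence $N_e \cap N_{C_p} = \z\{\phi_1 - \phi_2,\dots,\phi_1 - \phi_q\}$. Substituting this back gives the three cases in the statement.

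I do not expect a genuine obstacle here; the only points needing any attention are the vanishing of the orientation characters — automatic since one Weyl group is trivial and the other has odd order while $\Aut(\z)$ has order two — and the elementary linear-algebra computation of $N_e \cap N_{C_p}$.
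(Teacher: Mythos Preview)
Your proposal is correct and follows essentially the same route as the paper: identify the two subgroups $e$ and $C_p$, compute $N_e$ and $N_{C_p}$ from the fixed-point dimension vectors, observe that both Weyl groups act trivially on $\Z$ so $N_H^+=N_H$, and then compute the intersection. Your explicit verification that $N_e\cap N_{C_p}$ consists of the $\sum_t a_t\phi_t$ with $\sum_t a_t=0$ is a nice addition of detail.
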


\begin{figure}
\includegraphics[trim=0 40 0 0, clip]{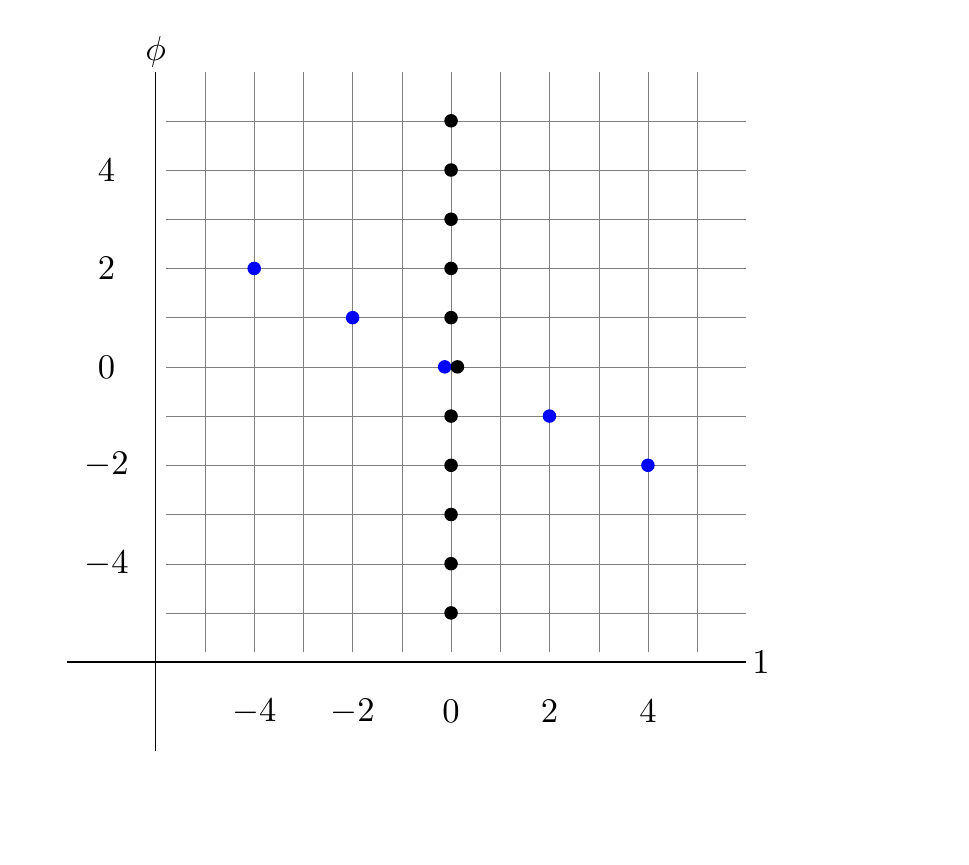}
\caption{Degrees in $RO(C_3)$ where $\pi_V^{C_3}(S^0)$ has infinite rank. A black $\bullet$ indicates a copy of $\z$ arising from $N_{C_3}^+$ and a blue $\bullet$ indicates a copy of $\z$ arising from $N_e^+$.}
\end{figure}

\begin{rem2}
  We note that $\phi_1, \ldots, \phi_q$ have similar behaviour.
  Thus we are considering $\Z \oplus
N_G$  and the same picture as for $C_2$, but now the vertical line
represents $N_G$ and the antidiagonal represents another subspace
isomorphic to $N_G$. The origin now consists of $\{\Sigma_i a_i \phi_i \st
\Sigma_i a_i=0\}$, perhaps to be thought of as the kernel of $N_G\lra
\Z$, and isomorphic to $\Z^{q-1}$. 
  \end{rem2}
\subsection{Cyclic groups of odd prime power order}

Let $q = \frac{p^n-1}{2}$. Then
$$RO(C_{p^n}) \cong \z \{ 1, \phi_1, \ldots, \phi_q\} \cong \z^{q+1}.$$
For all $0 \leq m \leq n$, we have
$$N_{C_{p^m}}^+  = N_{C_{p^m}} \cong \z\{ 2 - \phi_i : p^m \mid i\} \oplus \z\{\phi_j : p^m \nmid j\}.$$
Indeed, let $\gamma$ denote a generator of $C_{p^n}$, so $\gamma^{p^{n-m}}$ is a generator for $C_{p^m}$. Since $\lambda_i: \gamma \mapsto \cdot e^{\frac{2\pi i}{p^n}}$, 
$$\lambda_i: \gamma^{p^{n-m}} \mapsto \cdot e^{\frac{2\pi i p^{n-m}}{p^n}} = \cdot e^{\frac{2 \pi i}{p^m}}.$$
Therefore $\lambda_i$ pulls back to a trivial $C_{p^m}$-representation if and only if $p^m \mid i$. 

Describing the intersections of these subspaces gets complicated quickly. For example, if $0 \leq k < m \leq n$, then
$$N_{C_{p^k}}^+ \cap N_{C_{p^m}}^+ \cong  \z\{2 - \phi_i : p^m \mid i\} \oplus \z\{\phi_j : p^k \nmid i\} \oplus \z\{\phi_{p^k}-\phi_\ell : \ell > p^k, \ p^k \mid \ell, \ p^m \nmid \ell\}.$$
Here, we use that $p^m \mid i$ implies $p^k \mid i$, and similarly, $p^k \nmid j$ implies $p^m \nmid j$.

\subsection{Klein four group}

Let $K = C_2 \times C_2 = \{e, i, j,k\}$. We have
$$RO(K) \cong \z \{ 1, \sigma_i, \sigma_j, \sigma_k\},$$
where $1$ is the $1$-dimensional trivial representation, $\sigma_i$ is the $1$-dimensional representation on which $e$ and $i$ act trivially and $j$ and $k$ act by $(-1)$, and similarly for $\sigma_j$ and $\sigma_k$. Then
\begin{align*}
N_e &\cong \z \{ 1 - \sigma_i, 1-\sigma_j, 1-\sigma_k\}, \\
N_{\langle i \rangle} &\cong \z \{ 1 - \sigma_i, \sigma_j, \sigma_k\}, \\
N_{\langle j \rangle} &\cong \z\{ \sigma_i, 1 - \sigma_j, \sigma_k\}, \\
N_{\langle k \rangle} &\cong \z\{ \sigma_i, \sigma_j, 1 - \sigma_k\}, \\
N_{K} &\cong \z\{\sigma_i, \sigma_j, \sigma_k\}.
\end{align*}
The Weyl group $W_K(e) \cong K$ acts nontrivially on $\sigma_i$, $\sigma_j$, and $\sigma_k$, so we have
$$N_e^+ \cong \z\{2(1-\sigma_i), \sigma_i - \sigma_j, \sigma_i-\sigma_k\}.$$
The Weyl group $W_K(\langle i \rangle) \cong K / \langle i \rangle \cong \langle j \rangle \cong \langle k \rangle$ acts nontrivially on $\sigma_i$ but trivially on $\sigma_j$ and $\sigma_k$, so we have
$$N_{\langle i \rangle}^+ \cong \z \{ 2(1 - \sigma_i), \sigma_j, \sigma_k\},$$
and similarly,
$$N_{\langle j \rangle}^+ \cong \z\{ \sigma_i, 2(1-\sigma_j), \sigma_k\},$$
$$N_{\langle k \rangle}^+ \cong \z\{ \sigma_i, \sigma_j, 2(1-\sigma_k)\}.$$
Finally, since $W_K(K) \cong e$ must act trivially on $\z$, we have
$$N_K^+ \cong N_K \cong \z \{ \sigma_i, \sigma_j, \sigma_k\}.$$

To determine the ranks of $\piGa$, we now compute intersections. In the following, we let $a \in \{i, j, k\}$, $a' \in \{i,j,k\} \setminus \{a\}$, and $a'' \in \{i,j,k\} \setminus \{a, a'\}.$ Then we have
\begin{align*}
N_e^+ \cap N_{\langle a \rangle}^+ &\cong \z\{2(1-\sigma_a), \sigma_a - \sigma_{a'} \}, \\
N_e^+ \cap N_K^+ &\cong \z\{\sigma_i - \sigma_j, \sigma_i - \sigma_k\}, \\
N_{\langle a \rangle}^+ \cap N_{\langle a' \rangle}^+ &\cong \z \{ 2(1 - \sigma_a - \sigma_{a'}), \sigma_{a''} \}, \\
N_{\langle a \rangle}^+ \cap N_K^+ &\cong \z \{ \sigma_{a'}, \sigma_{a''} \},\\
N_e^+ \cap N_{\langle a \rangle}^+ \cap N_{\langle a' \rangle}^+ &\cong \{0\},\\
N_{\langle a \rangle}^+ \cap N_{\langle a'' \rangle}^+ \cap N_K^+ &\cong \z\{\sigma_{a'}\},\\
N_e^+ \cap N_{\langle a \rangle}^+ \cap N_K^+ &\cong \z \{\sigma_a - \sigma_{a'} \},\\
N_{\langle a \rangle} \cap N_{\langle a' \rangle} \cap N_{\langle a'' \rangle} &\cong \z \{ 2(1 - \sigma_a - \sigma_{a'} - \sigma_{a''}) \},
\end{align*}
and all $4$- and $5$-fold intersections are $\{0\}$.  

\begin{prop}
With $a, a', a''$ as above, we have
\[
\rk \pi_V^K(S^0) = 
\begin{cases}
	5 \quad & \text{ if } V = 0, \\
	3 \quad & \text{ if } V \in (\z\{\sigma_a - \sigma_{a'}\} \cup \z\{\sigma_{a'}\} \cup \z\{2(1 - \sigma_a - \sigma_{a'} - \sigma_{a''})\}) \setminus \{0\}, \\
	2 \quad & \text{ if } V \in (\bigcup_{H \neq H'} N_H^+ \cap N_{H'}^+ ) \setminus (\bigcup_{H \neq H' \neq H'' \neq H} N_H^+ \cap N_{H'}^+ \cap N_{H''}^+), \\
	1 \quad & \text{ if } V \in (\bigcup_H N_H^+) \setminus (\bigcup_{H \neq H'} N_H^+ \cap N_{H'}^+), \\
	0 \quad & \text{ otherwise.}
\end{cases}
\]
\end{prop}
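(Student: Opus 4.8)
The plan is to compute $\rk\pi_V^K(S^0)$ directly from \cref{cor:piafinite}, which tells us that this rank equals the number of conjugacy classes $(H)$ of subgroups of $K$ with $V\in N_H^+$ — and since $K$ is abelian, every subgroup is its own conjugacy class, so we are simply counting how many of the five sets $N_e^+, N_{\langle i\rangle}^+, N_{\langle j\rangle}^+, N_{\langle k\rangle}^+, N_K^+$ contain $V$. Thus the proposition is purely a matter of organizing the containment relations among these five affine-linear subspaces of $RO(K)\cong\z^4$, all of which have already been identified in the text, together with their pairwise and triple intersections.

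First I would set up the indicator function $V\mapsto \#\{H : V\in N_H^+\}$ and observe that, by inclusion–exclusion over the poset of intersections, the value is constant on the ``strata'' obtained by intersecting the $N_H^+$'s. Then I would go through the cases from the top down. \textbf{Rank $5$:} $V$ lies in all five $N_H^+$; since the intersection of any two of them is already a proper sublattice (e.g.\ $N_e^+\cap N_K^+\cong\z\{\sigma_i-\sigma_j,\sigma_i-\sigma_k\}$ has rank $2$) and the listed triple intersections have rank at most $1$, while the $4$- and $5$-fold intersections are $\{0\}$, the only such $V$ is $0$. \textbf{Rank $3$:} here I would use that $V$ must lie in exactly three of the $N_H^+$, i.e.\ in a triple intersection but in no fourth one; the nonzero triple intersections listed are $N_{\langle a\rangle}^+\cap N_{\langle a''\rangle}^+\cap N_K^+\cong\z\{\sigma_{a'}\}$, $N_e^+\cap N_{\langle a\rangle}^+\cap N_K^+\cong\z\{\sigma_a-\sigma_{a'}\}$, and $N_{\langle a\rangle}\cap N_{\langle a'\rangle}\cap N_{\langle a''\rangle}\cong\z\{2(1-\sigma_a-\sigma_{a'}-\sigma_{a''})\}$, and one checks each of these rank-one lattices is not contained in the remaining two $N_H^+$'s, so a generic (nonzero) point of each contributes exactly rank $3$. \textbf{Ranks $2$ and $1$:} these are then forced by the stratification — rank $2$ on points of some pairwise intersection not lying in any triple intersection, and rank $1$ on points of some $N_H^+$ not in any pairwise intersection — and rank $0$ elsewhere.

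The one genuine subtlety — the main obstacle — is bookkeeping: one must verify that the eight intersection formulas quoted in the text are correct and, more importantly, that they are \emph{exhaustive}, i.e.\ that there is no unlisted triple intersection of positive rank and that every rank-one or rank-two stratum is accounted for by the stated unions. Concretely, there are $\binom{5}{2}=10$ pairs and $\binom{5}{3}=10$ triples of subgroups; by the symmetry permuting $i,j,k$ these collapse to the handful of cases displayed, but I would want to check each orbit representative by a short linear-algebra computation in $\z^4$ (solving $x\cdot d_H=0$ simultaneously and intersecting with the relevant orientation kernels $N_H^+$). I would also double-check the borderline containments, e.g.\ that $\z\{\sigma_{a'}\}\subseteq N_K^+\cap N_{\langle a\rangle}^+\cap N_{\langle a''\rangle}^+$ but $\z\{\sigma_{a'}\}\not\subseteq N_e^+$ and $\not\subseteq N_{\langle a'\rangle}^+$, which is what pins the rank at exactly $3$ rather than $4$ or $5$ on that line. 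Once these containments are confirmed, the case analysis closes and the proposition follows immediately from \cref{cor:piafinite}.
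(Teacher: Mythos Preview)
Your proposal is correct and follows exactly the paper's approach: the proposition is stated immediately after the list of $N_H^+$'s and their pairwise and triple intersections (together with the observation that all $4$- and $5$-fold intersections are trivial), and is meant to be read off from those computations via \cref{cor:piafinite}. Your write-up simply makes explicit the counting argument that the paper leaves implicit.
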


\subsection{Dihedral groups of order $2p$, $p$ odd}

We have
$$RO(D_{2p}) \cong \z \{ 1, \sigma, \phi_1, \ldots, \phi_q\},$$
where $1$ is the trivial representation, $\sigma$ is the sign representation, and $\phi_t : D_{2p} \to \Aut(\r^2) \cong \Aut(\c)$ sends the generator of $C_p \subseteq D_{2p}$ to $\cdot e^{2\pi i t / p}$ and the generator of $C_2 \subseteq D_{2p}$ to reflection across the real axis. Then
\begin{align*}
N_e &\cong \z\{1 - \sigma, 2 - \phi_1, \ldots, 2 - \phi_q\}, \\
N_{C_2} &\cong \z \{ 1 - \sigma, 1 - \phi_1, \ldots, 1 - \phi_q\}, \\
N_{C_p} &\cong \z \{ 1 - \sigma, \phi_1, \ldots, \phi_q\}, \\
N_{D_{2p}} &\cong \z \{ \sigma, \phi_1, \ldots, \phi_q\}.
\end{align*}
Since $W_{D_{2p}}(C_2) \cong e \cong W_{D_{2p}}(D_{2p})$, we have $N_{C_2}^+ = N_{C_2}$ and $N_{D_{2p}}^+ = N_{D_{2p}}$. On the other hand, $W_{D_{2p}}(e) \cong D_{2p}$ and $W_{D_{2p}}(C_p) \cong C_2$, so 
\begin{align*}
N_e^+ &\cong \z \{ 2(1 - \sigma), 2(2 - \phi_1), \phi_1-\phi_2, \ldots, \phi_1 - \phi_q\}, \\
N_{C_p}^+ &\cong \z \{2(1 - \sigma), 2\phi_1, \phi_1-\phi_2,\ldots,\phi_1-\phi_q\}.
\end{align*}

We now compute intersections:
\begin{align*}
N_e^+ \cap N_{C_2}^+ &\cong \z \{2(1-\sigma), \phi_1-\phi_2, \ldots, \phi_1 - \phi_q\}, \\
N_e^+ \cap N_{C_p}^+ &\cong \z \{2(1-\sigma), \phi_1-\phi_2, \ldots, \phi_1 - \phi_q\}, \\
N_e^+ \cap N_{D_{2p}}^+ &\cong \z \{4\sigma-2\phi_1, \phi_1-\phi_2, \ldots, \phi_1 - \phi_q\}, \\
N_{C_2}^+ \cap N_{C_p}^+ &\cong \z \{ 2(1 - \sigma), \phi_1-\phi_2, \ldots, \phi_1 - \phi_q\}, \\
N_{C_2}^+ \cap N_{D_{2p}}^+ &\cong \z \{ \sigma - \phi_1, \phi_1-\phi_2, \ldots, \phi_1 - \phi_q\},  \\
N_{C_p}^+ \cap N_{D_{2p}}^+ &\cong \z \{ \phi_1, \dots, \phi_q\}, \\
N_e^+ \cap N_{C_2}^+ \cap N_{C_p}^+ &\cong \z \{ 2(1 - \sigma), \phi_1-\phi_2, \ldots, \phi_1 - \phi_q\}, \\
N_e^+ \cap N_{C_2}^+ \cap N_{D_{2p}}^+ &\cong \z\{ \phi_1-\phi_2, \ldots, \phi_1 - \phi_q \}, \\
N_e^+ \cap N_{C_p}^+ \cap N_{D_{2p}}^+ &\cong \z\{ \phi_1-\phi_2, \ldots, \phi_1 - \phi_q\}, \\
N_{C_2}^+ \cap N_{C_p}^+ \cap N_{D_{2p}}^+ &\cong \z \{ \phi_1-\phi_2, \ldots, \phi_1 - \phi_q\}, \\
N_e^+ \cap N_{C_2}^+ \cap N_{C_p}^+ \cap N_{D_{2p}}^+ &\cong \z \{ \phi_1-\phi_2, \ldots, \phi_1 - \phi_q\}.
\end{align*}

\begin{prop}
We have
\[
\rk \pi_V^{D_{2p}}(S^0) = 
\begin{cases}
4 \quad & \text{ if } V \in \z \{ \phi_1-\phi_2, \ldots, \phi_1 - \phi_q\}, \\
2 \quad & \text{ if } V \in (\bigcup_{H \neq H'} N_H^+ \cap N_{H'}^+ ) \setminus (\bigcup_{H \neq H' \neq H'' \neq H} N_H^+ \cap N_{H'}^+ \cap N_{H''}^+), \\
1 \quad & \text{ if } V \in (\bigcup_H N_H^+) \setminus (\bigcup_{H \neq H'} N_H^+ \cap N_{H'}^+), \\
0 \quad & \text{ otherwise.}
\end{cases}
\]
\end{prop}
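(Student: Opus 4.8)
The plan is to reduce the statement to \cref{cor:piafinite}, which identifies $\rk \pi_V^{D_{2p}}(S^0)$ with the number $r_V$ of conjugacy classes $(H)$ for which $V^H = 0$ and $W_{D_{2p}}(H)$ acts trivially on $\pi_0(S^{V^H})$. By the definitions of the lattices $N_H$ and $N_H^+$ from \cref{Sec:Geometry}, a conjugacy class $(H)$ contributes to $r_V$ if and only if $V \in N_H^+$; since $D_{2p}$ has exactly the four conjugacy classes of subgroups $e, C_2, C_p, D_{2p}$, this gives
$$\rk \pi_V^{D_{2p}}(S^0) = \#\{\,(H) : V \in N_H^+\,\}.$$
Thus the proposition becomes a purely combinatorial assertion about the relative position of the four sublattices $N_e^+, N_{C_2}^+, N_{C_p}^+, N_{D_{2p}}^+$ of $RO(D_{2p})$, and I would prove it by stratifying $RO(D_{2p})$ according to which of these four lattices contain $V$, reading off each stratum from the pairwise, triple, and quadruple intersections tabulated above.

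The first step is the top stratum: $V$ lies in all four $N_H^+$ exactly when it lies in their common intersection $\z\{\phi_1-\phi_2,\ldots,\phi_1-\phi_q\}$, which gives the rank-$4$ case. The second step — the only non-formal one — is to show there is no rank-$3$ stratum; equivalently, that each triple intersection $N_H^+ \cap N_{H'}^+ \cap N_{H''}^+$ is already contained in the remaining $N_{H'''}^+$, so that all four triple intersections collapse onto the quadruple intersection $\z\{\phi_1-\phi_2,\ldots,\phi_1-\phi_q\}$. For three of the four triples this is immediate from the tabulated intersections; for the remaining triple $N_e^+\cap N_{C_2}^+\cap N_{C_p}^+$ it follows from a short direct computation with the dimension vectors $d_e, d_{C_2}, d_{C_p}$, since imposing $V^e = V^{C_2} = V^{C_p} = 0$ already forces the $1$- and $\sigma$-coordinates of $V$ to vanish and hence $V \in N_{D_{2p}}^+$. (This collapse is a genuine feature of $D_{2p}$, in contrast with the Klein four-group treated above, where a rank-$3$ stratum does occur.)

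Granting the collapse, the remaining strata are bookkeeping. A degree $V$ lies in exactly two of the $N_H^+$ if and only if it lies in some pairwise intersection $N_H^+\cap N_{H'}^+$ but in no triple intersection, which is the rank-$2$ case and matches the stated set difference; $V$ lies in exactly one of them if and only if it lies in $\bigcup_H N_H^+$ but in no pairwise intersection, which is the rank-$1$ case; and $V$ lies in none of them otherwise, giving rank $0$. I expect the triple-intersection collapse in the second step to be the main point to get right, since it is exactly what rules out an intermediate rank value, and it relies on the precise form of the orientation characters $o_e, o_{C_p}$ and of the dimension vectors $d_H$ computed above; everything else is routine arithmetic with the explicit bases listed in the statement.
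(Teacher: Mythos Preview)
Your approach is exactly the paper's: reduce to \cref{cor:piafinite}, so that $\rk\pi_V^{D_{2p}}(S^0)$ counts how many of the four sublattices $N_H^+$ contain $V$, and then read the answer off the tabulated intersections. Your identification of the triple-intersection collapse as the only non-formal step is also right, and your dimension-vector argument for the triple $N_e^+\cap N_{C_2}^+\cap N_{C_p}^+$ is correct: writing $V=a\cdot 1+b\sigma+\sum_i c_i\phi_i$, the conditions $|V|=0$, $|V^{C_p}|=0$, $|V^{C_2}|=0$ give $a+b+2\sum c_i=0$, $a+b=0$, and $a+\sum c_i=0$, which force $a=b=0$ and $\sum c_i=0$, hence $V\in\z\{\phi_1-\phi_2,\ldots,\phi_1-\phi_q\}\subseteq N_{D_{2p}}^+$.

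One point worth flagging: your conclusion for that triple \emph{disagrees} with the paper's listed value $N_e^+\cap N_{C_2}^+\cap N_{C_p}^+\cong\z\{2(1-\sigma),\phi_1-\phi_2,\ldots,\phi_1-\phi_q\}$, and it is your computation that is correct. The source of the discrepancy is the paper's formula $N_{C_2}\cong\z\{1-\sigma,1-\phi_1,\ldots,1-\phi_q\}$: since the reflection generating $C_2$ acts by $-1$ on $\sigma$, one has $\dim\sigma^{C_2}=0$, so the first basis vector should be $\sigma$ rather than $1-\sigma$. In particular $2(1-\sigma)\notin N_{C_2}$, and several of the tabulated two- and three-fold intersections involving $N_{C_2}^+$ inherit this slip. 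With the corrected $N_{C_2}$, all four triple intersections coincide with the quadruple intersection and the Proposition holds as stated; your direct argument sidesteps the tabulation error entirely and establishes exactly this.
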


\subsection{Quaternion group}

Let $Q = Q_8$ denote the quaternion group of order $8$. We have
$$RO(Q) = \z \{ 1, \sigma_i, \sigma_j, \sigma_k, h\},$$
where $1, \sigma_i, \sigma_j, \sigma_k$ are the pullbacks of the $K$-representations of the same name along the quotient map $Q \to Q/C_2 \cong K$, and $h$ is the unique irreducible $4$-dimensional representation of $Q$. Then with $a$, $a'$, and $a''$ as in our analysis of $K$, 
\begin{align*}
N_e &\cong \z\{1 - \sigma_i, 1-\sigma_j, 1-\sigma_k, 4 1 - h\}, \\
N_{C_2} &\cong \z\{ 1 - \sigma_i, 1 - \sigma_j, 1-\sigma_k, h\}, \\
N_{\langle a \rangle} &\cong \z \{ 1 - \sigma_a, \sigma_{a'}, \sigma_{a''}, h\}, \\
N_Q &\cong \z\{ \sigma_i, \sigma_j, \sigma_k, h\},
\end{align*}
and
\begin{align*}
N_e^+ &\cong \z\{2(1 -\sigma_i), \sigma_i-\sigma_j, \sigma_i-\sigma_k, 41-h\}, \\
N_{C_2}^+ &\cong \z\{2(1-\sigma_i), \sigma_i - \sigma_j, \sigma_i-\sigma_k, h\}, \\
N_{\langle a \rangle}^+ &\cong \z \{2(1-\sigma_a), \sigma_{a'}, \sigma_{a''}, h\}, \\
N_Q^+ &\cong \z\{\sigma_i, \sigma_j, \sigma_k, h\}.
\end{align*}

The $2$-fold intersections are as follows:
\begin{align*}
N_e^+ \cap N_{C_2}^+ &\cong \z \{2(1-\sigma_i), \sigma_i - \sigma_j, \sigma_i - \sigma_k\}, \\
N_e^+ \cap N_{\langle a \rangle}^+ &\cong \z \{ 2(1 - \sigma_a), \sigma_{a'} - \sigma_{a''}, h - 4 \sigma_{a'}\}, \\
N_e^+ \cap N_Q^+ &\cong \z \{ \sigma_i - \sigma_j, \sigma_i - \sigma_k, 4\sigma_i - h \}, \\
N_{C_2}^+ \cap N_{\langle a \rangle}^+ &\cong \z \{ 2(1 - \sigma_a), \sigma_{a'} - \sigma_{a''}, h\}, \\
N_{C_2}^+ \cap N_Q^+ &\cong \z\{ \sigma_i - \sigma_j, \sigma_i - \sigma_k, h\}, \\
N_{\langle a \rangle}^+ \cap N_{\langle a' \rangle}^+ &\cong \z \{ 2(1 - \sigma_a - \sigma_{a'}), \sigma_{a''}, h\}, \\
N_{\langle a \rangle}^+ \cap N_Q^+ &\cong \z \{ \sigma_{a'}, \sigma_{a''}, h\}.
\end{align*}

The $3$-fold intersections are as follows:
\begin{align*}
N_e^+ \cap N_{C_2}^+ \cap N_{\langle a \rangle}^+ &\cong \z \{ 2(1 - \sigma_a), \sigma_{a'} - \sigma_{a''}\}, \\
N_e^+ \cap N_{C_2}^+ \cap N_Q^+ &\cong \z \{ \sigma_i - \sigma_j, \sigma_i - \sigma_k\}, \\
N_e^+ \cap N_{\langle a \rangle}^+ \cap N_Q^+ &\cong \z \{\sigma_{a'} - \sigma_{a''}, h - 4 \sigma_{a'} \}, \\
N_e^+ \cap N_{\langle a \rangle}^+ \cap N_{\langle a' \rangle}^+ &\cong \z \{2(1 - \sigma_a - \sigma_{a'} - \sigma_{a''}), h-4\sigma_{a''}\}, \\
N_{C_2}^+ \cap N_{\langle a \rangle}^+ \cap N_Q^+ &\cong \z\{\sigma_{a'}-\sigma_{a''},h\}, \\
N_{C_2}^+ \cap N_{\langle a \rangle}^+ \cap N_{\langle a' \rangle}^+ &\cong \z \{ 2(1-\sigma_a - \sigma_{a'} - \sigma_{a''}), h\}, \\
N_{\langle a \rangle}^+ \cap N_{\langle a' \rangle}^+ \cap N_Q^+ &\cong \z \{\sigma_{a''}, h\}, \\
N_{\langle a \rangle}^+ \cap N_{\langle a' \rangle}^+ \cap N_{\langle a'' \rangle}^+ &\cong \z\{2(1-\sigma_a-\sigma_{a'}-\sigma_{a''}),h\},
\end{align*}

The $4$-fold intersections are as follows:
\begin{align*}
N_e^+ \cap N_{C_2}^+ \cap N_{\langle a \rangle}^+ \cap N_Q^+ &\cong \z \{ \sigma_{a'} - \sigma_{a''} \}, \\
N_e^+ \cap N_{C_2}^+ \cap N_{\langle a \rangle}^+ \cap N_{\langle a' \rangle}^+ &\cong \z \{ 2(1 - \sigma_a - \sigma_{a'} - \sigma_{a''}\}, \\
N_e^+ \cap N_{\langle a \rangle}^+ \cap N_{\langle a' \rangle}^+ \cap N_{\langle a'' \rangle}^+ &\cong \z \{ 2(1 - \sigma_a - \sigma_{a'} - \sigma_{a''})\}, \\
N_{C_2}^+ \cap N_{\langle a \rangle}^+ \cap N_{\langle a' \rangle}^+ \cap N_Q^+ &\cong \z\{ h\}, \\
N_{C_2}^+ \cap N_{\langle a \rangle}^+ \cap N_{\langle a' \rangle}^+ \cap N_{\langle a'' \rangle}^+ &\cong \z \{ 2(1-\sigma_a-\sigma_{a'} - \sigma_{a''}), h\}, \\
N_{\langle a \rangle}^+ \cap N_{\langle a' \rangle}^+ \cap N_{\langle a'' \rangle}^+ \cap N_Q^+ &\cong \z \{h\}.
\end{align*}

The $5$-fold intersections are as follows:
\begin{align*}
N_e^+ \cap N_{C_2}^+ \cap N_{\langle a \rangle}^+ \cap N_{\langle a' \rangle}^+ \cap N_Q^+ &\cong \{0\}, \\
N_e^+ \cap N_{C_2}^+ \cap N_{\langle a \rangle}^+ \cap N_{\langle a' \rangle}^+ \cap N_{\langle a'' \rangle}^+ &\cong \z\{2(1 - \sigma_a - \sigma_{a'} - \sigma_{a''})\}, \\
N_e^+ \cap N_{\langle a \rangle}^+ \cap N_{\langle a' \rangle}^+ \cap N_{\langle a'' \rangle}^+ \cap N_Q^+ &\cong \{0\}, \\
N_{C_2}^+ \cap N_{\langle a \rangle}^+ \cap N_{\langle a' \rangle}^+ \cap N_{\langle a'' \rangle}^+ \cap N_Q^+ &\cong \z\{h\}.
\end{align*}

For completeness, the unique $6$-fold intersection is
$$N_e^+ \cap N_{C_2}^+ \cap N_{\langle a \rangle}^+ \cap N_{\langle a' \rangle}^+ \cap N_{\langle a'' \rangle}^+ \cap N_Q^+ \cong \{0\}.$$

\begin{prop}
With $a$, $a'$, $a''$ as above, we have
\[
\rk \pi_V^K(S^0) = 
\begin{cases}
	6 \quad & \text{ if } V=0, \\
	5 \quad & \text{ if } V \in ( \z\{h\} \cup \z\{2(1-\sigma_a-\sigma_{a'}-\sigma_{a''})\}) \setminus \{0\}, \\
	4 \quad & \text{ if } V \in (\z\{\sigma_{a'}-\sigma_{a''}\} \cup \z\{2(1-\sigma_a-\sigma_{a'}-\sigma_{a''}),h\}) \\
			 & \quad \quad \quad \setminus (\z\{h\} \cup \z\{2(1-\sigma_a-\sigma_{a'}-\sigma_{a''})\}), \\
	3 \quad & \text{ if } V \in (\bigcup_{H, H', H'' } N_H^+ \cap N_{H'}^+ \cap N_{H''}^+) \setminus (\bigcup_{H, H', H'', H'''} \bigcap_{L \in \{H, H', H'', H'''\}} N_L^+), \\
	2 \quad & \text{ if } V \in (\bigcup_{H \neq H'} N_H^+ \cap N_{H'}^+ ) \setminus (\bigcup_{H,H',H''} N_H^+ \cap N_{H'}^+ \cap N_{H''}^+), \\
	1 \quad & \text{ if } V \in (\bigcup_H N_H^+) \setminus (\bigcup_{H \neq H'} N_H^+ \cap N_{H'}^+), \\
	0 \quad & \text{ otherwise.}
\end{cases}
\]
\end{prop}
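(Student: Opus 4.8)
The plan is to deduce everything from \cref{cor:piafinite}, which gives, for any $V \in RO(Q)$,
$$\rk \pi_V^Q(S^0) = \#\{(H) : V \in N_H^+\},$$
the count running over conjugacy classes of subgroups of the quaternion group $Q = Q_8$. There are exactly six such classes: the trivial subgroup $e$, the center $C_2$, the three cyclic subgroups $\langle i\rangle, \langle j\rangle, \langle k\rangle$ of order four (each normal, hence alone in its conjugacy class, and permuted by $\Aut(Q) \cong S_3$), and $Q$ itself; in particular the rank is at most $6$.

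First I would record, for each of these $H$, the null space $N_H$ of virtual representations with $\dim V^H = 0$ together with its orientable part $N_H^+$, using the real irreducibles $1, \sigma_i, \sigma_j, \sigma_k, h$ and the recipe of \cref{Sec:Geometry}: $N_H$ is the hyperplane $\{x : x \cdot d_H = 0\}$ cut out by the fixed-point dimension vector $d_H$, and $N_H^+ = \ker(o_H)$, where on the basis vector $S_\ell - d_H(\ell)$ of $N_H$ the orientation $o_H$ is the determinant character of $W_Q(H)$ acting on $S_\ell^H$ --- here $W_Q(e) = Q$, $W_Q(C_2) \cong C_2 \times C_2$, $W_Q(\langle a\rangle) \cong C_2$, and $W_Q(Q) = e$. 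This produces the six displays for $N_e^+, N_{C_2}^+, N_{\langle a\rangle}^+, N_Q^+$ above.

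Next, the key point is that $\rk \pi_V^Q(S^0) \geq k$ if and only if $V$ lies in some $k$-fold intersection $N_{H_1}^+ \cap \cdots \cap N_{H_k}^+$, equivalently in the union $U_k$ of all such intersections; since $U_6 \subseteq U_5 \subseteq \cdots \subseteq U_1$, this means $\rk \pi_V^Q(S^0) = k$ exactly on $U_k \setminus U_{k+1}$ (with $U_7 := \varnothing$). So the remaining work is to compute the $U_k$. Exploiting the $S_3$-symmetry on $\{\langle i\rangle, \langle j\rangle, \langle k\rangle\}$, it is enough to evaluate one representative of each orbit of subsets of the six subgroups, which gives the tables of $2$-, $3$-, $4$-, $5$- and $6$-fold intersections listed above; reading off the unions then yields $U_6 = \{0\}$, $U_5 = \z\{h\} \cup \z\{2(1 - \sigma_i - \sigma_j - \sigma_k)\}$, and $U_4 = \big(\bigcup_a \z\{\sigma_{a'} - \sigma_{a''}\}\big) \cup \z\{2(1 - \sigma_i - \sigma_j - \sigma_k), h\}$, while for $k \leq 3$ the proposition simply records the difference $U_k \setminus U_{k+1}$ as a union of $k$-fold intersections with a union of $(k+1)$-fold intersections removed. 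Substituting into $\rk \pi_V^Q(S^0) = k \iff V \in U_k \setminus U_{k+1}$ then produces the stated case list.

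The main obstacle is the combinatorial bookkeeping for the intersections: there are $2^6 - 1$ subsets in play, and while the $S_3$-symmetry and the nesting $U_{k+1} \subseteq U_k$ cut this down, one must be careful with the families containing $C_2$ together with two or three of the $\langle a\rangle$, where the two-dimensional subspace $\z\{2(1 - \sigma_i - \sigma_j - \sigma_k), h\}$ appears and causes the rank to jump to $4$ off the codimension-one loci. A secondary subtlety is the placement of $h$: one checks that $h^{C_2} = h^{\langle a\rangle} = h^Q = 0$ while $h^e = h$ has dimension $4$, so $h \in N_{C_2} \cap N_{\langle a\rangle} \cap N_Q$ and $4\cdot 1 - h \in N_e$; and since the relevant Weyl groups act on the corresponding fixed spaces through $\det(h)$, which is trivial as $h$ takes values in $SO(4)$, one gets $h \in N_{C_2}^+ \cap N_{\langle a\rangle}^+ \cap N_Q^+$ and $4\cdot 1 - h \in N_e^+$.
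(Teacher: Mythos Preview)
Your approach is exactly the paper's: the proposition is just a summary of the intersection tables displayed immediately before it, organized via the filtration $U_k \setminus U_{k+1}$ you describe, and your identification of the Weyl groups, the orientation characters, and the role of $h$ matches the paper's computations. One small slip that does not affect the argument: $\Aut(Q_8) \cong S_4$, not $S_3$; it is the outer automorphism group $\mathrm{Out}(Q_8) \cong S_3$ that acts faithfully as the full symmetric group on $\{\langle i\rangle, \langle j\rangle, \langle k\rangle\}$.
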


\bibliographystyle{alpha}
\bibliography{GSerreFiniteBibliography}

\end{document}